\newtheorem{lemma}{Lemma} 
\newtheorem{prop}{Proposition} \newtheorem{theorem}{Theorem}
\newtheorem{rem}{Remark} \newtheorem{coro}{Corollary} \newtheorem{dfn}{Definition}
\title{Weakly Abundant Semigroups and Variants}
\author {P. G. Romeo and Siji Michel}
\keywords{Abundant semigroup, Weakly U-abundant semigroups, Variants}
\address{Dept. of Mathematics, CUSAT, Kerala, INDIA.}
\email{$romeo_-parackal@yahoo.com$}
\keywords{Abundant semigroup, Weakly abundant semigroup,  Variants, Natural partial order}
\begin{document}
\begin{abstract}  $S$ be a semigroup  and $\tilde{\mathcal L} \; [\tilde{\mathcal R}]$ be certain generalized Green's relations on $S$. If each $\tilde{\mathcal L} \; [\tilde{\mathcal R}]$- class of $S$ contains idempotent, then $S$ is a weakly abundant semigroup.
In this paper we discuss the variants of  weakly $U$- abundant semigroups and it is shown that the idempotent variants of a weakly $U$- abundant semigroup is weakly $U$- abundant. 
Further we also discuss the natural partial on variants of a weakly $U$-abundant semigroup.
\end{abstract}
\maketitle

\section{Preliminaries}
A set $S$ is a groupoid with respect to a binary operation if for every pair of elements $a,b\in S$ there is an element $a\cdot
b\in S$ which is the product of $a$ by $b$. A groupoid $S$ is a semigroup if the binary operation on $S$ is associative. $S$ be a semigroup and $a\in
S$ then smallest left ideal containing $a$ is $Sa\cup \{a\}$ and is called the principal left ideal generated by $a$ written as $S^1a$. The equivalence
relations $\mathcal L\,[\mathcal R]$ on $S$ is defined by $a\mathcal L b\,[a\mathcal Rb]$ if and only if $S^1a=S^1b\, [aS^1=bS^1]$. The intersection of
$\mathcal R$ and $\mathcal L$ is denoted by $\mathcal H$ and their join by $\mathcal D$. The two sided analogue of $\mathcal L$ and $\mathcal R$ is
$\mathcal J$. These equivalence relations are termed as Green's relations.  An element $a\in S$ is an idempotent if $e\cdot e=e$ and the set of all
idempotent elements in $S$ is denoted by $E(S)$. A semigroup $S$ is regular if for any $a\in S$ there exists an $x\in S$ such that $axa=a$. Note that
in a regular semigroups each $\mathcal L$ - class and each $\mathcal R$ - class contains idempotents.

One approach to study semigroups which are not regular is to consider the generalized Green's relations on semigroups. This idea was first introduced
by McAlister and Pastjin. The generalized Green's relations $\mathcal R^*\; [\mathcal L^*]$ on a semigroup $S$  is defined by $a\mathcal
R^*b\;[a\mathcal L^*b]$ if and only if $a\mathcal Rb\;[a\mathcal Lb]$ in some semigroup $\bar S$ containing $S$. Also we can define the join of
$\mathcal R^*$ and $\mathcal L^*$ by $\mathcal D^*,\quad \mathcal H^*=\mathcal R^*\cap \mathcal L^*$ and the two sided analogue by $\mathcal J^*$.

\begin{dfn} A semigroup is abundant if every $\mathcal R^*$- class and $\mathcal L^*$-class contains an idempotent. \end{dfn} 
\begin{lemma} The
following are equivalent for elements $a,b\in S$: \begin{enumerate} \item $a\mathcal R^*b$ \item for all $x,y\in S^1, \;xa=ya $ if and only if xb=yb.
\end{enumerate} 
\end{lemma}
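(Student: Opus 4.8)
The plan is to prove the two implications separately, with the forward direction $(1)\Rightarrow(2)$ being routine and the converse $(2)\Rightarrow(1)$ carrying the real content, since it requires us to manufacture an oversemigroup out of the purely internal cancellation condition.

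For $(1)\Rightarrow(2)$, I would start from the definition: $a\mathcal{R}^*b$ means $a\mathcal{R}b$ in some semigroup $\bar S\supseteq S$, so that $a\bar S^1=b\bar S^1$ and hence there exist $s,t\in\bar S^1$ with $a=bs$ and $b=at$. Now take any $x,y\in S^1$ with $xa=ya$. Multiplying this equation on the right by $t$ and using $at=b$ together with associativity gives $xb=(xa)t=(ya)t=yb$; the symmetric computation with $s$ and $a=bs$ recovers $xa=ya$ from $xb=yb$. Since every product here is computed the same way in $S$ and in $\bar S$, and the case $x=1$ or $y=1$ only asserts equality with $a$ or $b$, this yields the biconditional of $(2)$.

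For $(2)\Rightarrow(1)$, the idea is to realise $S$ inside a concrete oversemigroup in which Green's $\mathcal{R}$ literally becomes condition $(2)$. I would use the right regular representation: for each $a\in S$ define $\rho_a\colon S^1\to S^1$ by $\rho_a(x)=xa$, and embed $S$ into the full transformation semigroup on $S^1$ equipped with the product $\ast$ for which $a\mapsto\rho_a$ is a homomorphism (namely $(f\ast g)(x)=g(f(x))$, so that $\rho_a\ast\rho_b=\rho_{ab}$). This map is injective because $\rho_a(1)=a$, so $(\mathcal{T}(S^1),\ast)$ is a genuine oversemigroup of $S$. The key observation is that under this product Green's relation $\mathcal{R}$ on $\mathcal{T}(S^1)$ is exactly equality of kernels, $\rho_a\,\mathcal{R}\,\rho_b \iff \ker\rho_a=\ker\rho_b$; and by the very definition of $\rho$, the pair $(x,y)$ lies in $\ker\rho_a$ iff $xa=ya$, so $\ker\rho_a=\ker\rho_b$ is nothing but the hypothesis $(2)$. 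Hence $(2)$ forces $\rho_a\,\mathcal{R}\,\rho_b$ in $(\mathcal{T}(S^1),\ast)$, which is to say $a\mathcal{R}^*b$.

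The step I expect to require the most care is the converse direction, and within it the composition bookkeeping: one must fix the convention on $\mathcal{T}(S^1)$ so that $a\mapsto\rho_a$ is a homomorphism rather than an anti-homomorphism, and then check that, with that same convention, the factorisation defining $\mathcal{R}$ in $\mathcal{T}(S^1)$ really does collapse to equality of kernels (the nontrivial inclusion being that $\ker\rho_a\subseteq\ker\rho_b$ lets one build a total map $h$ with $\rho_b=\rho_a\ast h$). A secondary point to handle cleanly is the interplay between $S^1$ and $\bar S^1$ when $S$ lacks an identity, ensuring the adjoined identities cause no ambiguity in the equations $xa=ya$.
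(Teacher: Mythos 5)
The paper does not actually prove this lemma: it is stated as a known fact (it is the standard characterisation of $\mathcal{R}^*$ going back to McAlister and Fountain) and the text moves straight on to the corollary. So there is nothing in the paper to compare against line by line; judged on its own, your argument is correct and is essentially the classical proof. The direction $(1)\Rightarrow(2)$ is exactly as you say: from $a\,\mathcal{R}\,b$ in $\bar S$ one gets $s,t\in\bar S^1$ with $a=bs$, $b=at$, and right multiplication by $t$ (resp.\ $s$) transports $xa=ya$ to $xb=yb$ and back, the computation taking place in $\bar S$ while the hypotheses and conclusions are equations in $S$. For $(2)\Rightarrow(1)$ your use of the right regular representation is the right move and the details check out: with $x(f\ast g)=(xf)g$ the map $a\mapsto\rho_a$ is an injective homomorphism (injective because $1\rho_a=a$), $\rho_a\ast\rho_b=\rho_{ab}$, and in $\mathcal{T}(S^1)$ one has $f\,\mathcal{R}\,g$ iff $\ker f=\ker g$, the nontrivial half being precisely the construction you flag, namely that $\ker\rho_a\subseteq\ker\rho_b$ allows one to define $h$ on $\operatorname{im}\rho_a$ by $(x\rho_a)h=x\rho_b$ and extend it arbitrarily, giving $\rho_b=\rho_a\ast h$. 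Since $\ker\rho_a=\{(x,y):xa=ya\}$, condition $(2)$ is literally $\ker\rho_a=\ker\rho_b$, so $S$ sits inside the concrete oversemigroup $\mathcal{T}(S^1)$ with $a\,\mathcal{R}\,b$ there, which is $(1)$. Your closing caveat about the adjoined identity is handled correctly, since an equation $xa=ya$ with $x=1$ just reads $a=ya$ and is unambiguous. In short: the proof is complete and correct; it simply supplies what the paper leaves as a citation.
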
 
This condition is simplified when one of the elements is an idempotent. 
\begin{coro} Let a be an element of a semigroup S,
and let $e\in E(S)$. Then the following are equivalent: 
\begin{enumerate} \item $a\mathcal R^*e$ \item ea=a and for all $x,y\in S^1, \;xa=ya $ implies xe=ye. \end{enumerate} \end{coro}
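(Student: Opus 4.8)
The plan is to derive both directions directly from the preceding Lemma, using only the idempotency $ee = e$ as the extra ingredient; no new machinery is needed.

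For $(1) \Rightarrow (2)$, I would start from $a\mathcal{R}^*e$ and invoke the Lemma to get the biconditional: for all $x, y \in S^1$, $xa = ya$ if and only if $xe = ye$. The implication asserted in $(2)$ is then just the forward half of this equivalence, so it costs nothing. To produce the relation $ea = a$, I would test the biconditional on a well-chosen pair. Taking $x = e$ and $y = 1$, idempotency gives $xe = ee = e = 1 \cdot e = ye$; the equivalence then forces $xa = ya$, i.e. $ea = a$. This is the one place where being an idempotent, rather than an arbitrary element, is actually used.

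For the converse $(2) \Rightarrow (1)$, the Lemma tells me it is enough to establish the biconditional $xa = ya \Leftrightarrow xe = ye$ for all $x, y \in S^1$. The forward implication is hypothesis $(2)$ verbatim. The reverse implication is where $ea = a$ earns its keep: assuming $xe = ye$, I would right-multiply by $a$ to obtain $xea = yea$, and then replace $ea$ by $a$ on both sides to conclude $xa = ya$. Having both implications, the Lemma delivers $a\mathcal{R}^*e$.

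I do not expect a serious obstacle here; the whole content is the two observations above, namely choosing the test pair $(e, 1)$ to extract $ea = a$ from idempotency, and using $ea = a$ to supply the reverse implication that the Lemma requires but that hypothesis $(2)$ omits. Accordingly, I would anticipate the final write-up to be only a few lines once these two points are isolated.
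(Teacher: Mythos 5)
Your argument is correct and is exactly the standard derivation of this corollary from the preceding Lemma (the paper itself states the corollary without proof, so there is nothing to diverge from): the test pair $x=e$, $y=1$ extracts $ea=a$ from the biconditional via $ee=e$, and conversely $ea=a$ supplies the reverse implication $xe=ye\Rightarrow xea=yea\Rightarrow xa=ya$ that hypothesis (2) omits. No gaps.
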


\subsection{Weakly $U$ Abundant Semigroups} 
As a further generalization of the Green's relation $\mathcal L$ on a semigroup 
$S$, a new Green's relation is introduced in the following way. Let $E(S)$ be the set of idempotents in $S$ and suppose $U\subseteq E(S)$ be any non-empty subset of 
$E(S)$. Then the relation $\tilde{\mathcal L^U}$ defined by
the rule for any $a,b\in S, a \tilde{\mathcal L^U} b$ if and only if for all 
$e\in U$ the following hold 
$$ae=a\,\, \text{if and only if }\,\,be=b.$$ 
The relation $\tilde {\mathcal R^U}$ is defined dually, also it is easy to see that 
$\mathcal L\subseteq \mathcal L^*\subseteq \tilde{\mathcal L^U}$. In particular if 
$S$ is an abundant semigroup with $U=E(S)$ clearly 
$\mathcal L^*=\tilde{\mathcal L^U}$ and 
$\tilde {\mathcal L^U}$ and $\tilde {\mathcal R^U}$ are equivalence relations.

 Note that $\tilde {\mathcal L^U}$ and $\tilde {\mathcal R^U}$ need not be right [left] congruences and that 
 $\mathcal L\subseteq \mathcal L^*\subseteq \tilde{\mathcal L^U}$ and $\mathcal R\subseteq \mathcal R^*\subseteq \tilde{\mathcal R^U}$ with equality if $S$ is regular. 
Also analogous to $\mathcal{H}$ we have $\tilde {\mathcal H^U}=\tilde{\mathcal L^U}\cap\tilde{\mathcal R^U}$ and in a similar way $\tilde{\mathcal D^U}$ and $\tilde{\mathcal J^U}$.

\begin{dfn}A semigroup is called weakly $U$ abundant  if every $\tilde{\mathcal L^U}$ and every $\tilde{\mathcal R^U}$ class contains idempotent. \end{dfn} 

A semigroup $S$ is called fundamental if it cannot be shrunk homomorphicaly without collapsing idempotens together, that is, if $\varphi$ is a
homomorphism from $S$ then $$\varphi | E\;\text{one-one}\Rightarrow \varphi \;\text{one-one}$$ this is equivalent to $S$ having no nontrivial
idempotent separating congruence.

A mapping $\phi:S\rightarrow T$ between two semigroups S and T is a semigroup homomorphism if it preserves the semigroup multiplication. $im
\phi=\{a\phi\mid a\in S\}$ is a subsemigroup and $ker \phi =\{(a,b)\mid a\phi=b\phi\}$ is a congruence. If $S=T$ then $\phi$ is an endomorphism. $S$ is
a semigroup and $\nu$ is any congruence on $S$ then the natural mapping $$\varphi:S\rightarrow S/\nu, a\mapsto a\nu$$ is a semigroup homomorphism.

The fundamental homomorphism theorem states that if $\phi:S\rightarrow T$ is a homomorphism then $$S/ker \phi\cong im \phi$$

\section{Variants of abundant and weakly $U$ abundanr semigroups}

Varints of an abstract semigroup were first studied by John Hickey \cite{hic}. Let $S$ be a semigroup, the variant of $S$ with respect to an element $a\in S$ is a semigroup with the same underlying set $S$ with multiplication $ \star_{a}$ given by $x \star_{a} y =xay$ for all $x,y\in S$.

\begin{dfn} (Definition 1.1; cf. \cite{Hick})
Let $ S $ be a semigroup and $ a $ be an element of $ S $. An associative sandwich operation $ \star_{a} $ can be defined on $ S $ by $ x\star_{a}y = xay $ for all $ x,y\in S $. The semigroup $ (S,\star_{a}) $ is called the variant of $ S $ with respect to $ a $ and is denoted as $ S^{a} $.
\end{dfn}

Variants of full transformation semigroups, symmetric inverse monoids, regular semigroups are widely studied  (see. cf.\cite{Hick},\cite{Tsy},\cite{law}).
In the following we consider variants of abundant semigroup $S$ and discuss the 
Green's relations $\mathcal L^{*a},\,\mathcal R^{*a}\,
\mathcal H^{*a}$ and $\mathcal D^{*a}$ on the variant $S^a$ of $S$.

\begin{dfn} Let $S$ be an abundant semigroup. $S^a,\, a\in S$ is the variant with respect to $a$, for $x,y\in S^a$ define the Green's relations of $S^a$ as 
\begin{align*}
 x\mathcal R^{*a}y&\Leftrightarrow  u\star_{a}x=v\star_{a}x \Leftrightarrow u\star_{a}y=v\star_{a}y\\
 x\mathcal L^{*a}y&\Leftrightarrow  x\star_{a}u=x\star_{a}v \Leftrightarrow y\star_{a}u=y\star_{a}v ,\,\text{for all}\,u,v\in S^a\\
 \mathcal H^{*a}&=\mathcal R^{*a}\cap\mathcal L^{*a},\, \text{and\,} \,\mathcal D^{*a}=
 \mathcal R^{*a}\circ  \mathcal L^{*a}.
\end{align*}
\end{dfn}

\begin{prop} Let $S$ be an abundant semigroup. Consider the following sets 
\begin{align*}
 P_{1}&=\{ x \in S : ax \, \mathcal{R}^{*} \, x \} \\
 P_{2}&=\{  x\in S : xa \, \mathcal{L}^{*} \, x \} \\
  P&=P_{1} \cap P_{2} 
\end{align*}
For $ x \in S^a $, the equivalence classes $\mathcal R^{*a},\,\mathcal L
^{*a}$ of the variant $S^a$ are as follows:
$$R_{x}^{*a}\cap P_1 = R_{x}^{*}\quad \text{and} \quad L_{x}^{*a}\cap P_2 = L_{x}^{*}$$		
\end{prop}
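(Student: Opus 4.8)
The plan is to reduce the variant relation $\mathcal{R}^{*a}$ on $S^{a}$ to the ambient relation $\mathcal{R}^{*}$ on $S$ evaluated at the translated elements $ax,ay$, and then to use membership in $P_{1}$ to cancel the leading factor $a$ by transitivity. Two preliminary facts organise the whole argument. First, $\mathcal{R}^{*}$ is a left congruence on any semigroup: by Lemma 1, replacing a test pair $u,v$ by $uc,vc$ preserves the cancellation condition, so $x\,\mathcal{R}^{*}\,y$ implies $cx\,\mathcal{R}^{*}\,cy$ for every $c\in S$. Second, unwinding the definition, $x\,\mathcal{R}^{*a}\,y$ says exactly that $uax=vax \Leftrightarrow uay=vay$ for all $u,v\in S$; comparing with the Lemma 1 description of $ax\,\mathcal{R}^{*}\,ay$ (which quantifies over $S^{1}$), one direction is immediate, namely $ax\,\mathcal{R}^{*}\,ay \Rightarrow x\,\mathcal{R}^{*a}\,y$, while the converse will be the delicate point addressed last.

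For the inclusion $R_{x}^{*}\subseteq R_{x}^{*a}\cap P_{1}$ I would assume $x\in P_{1}$ and take $y$ with $y\,\mathcal{R}^{*}\,x$. The left-congruence property gives $ay\,\mathcal{R}^{*}\,ax$, whence $y\,\mathcal{R}^{*a}\,x$ by the easy direction above, so $y\in R_{x}^{*a}$. Chaining $ay\,\mathcal{R}^{*}\,ax\,\mathcal{R}^{*}\,x\,\mathcal{R}^{*}\,y$ --- the middle link being $x\in P_{1}$ --- yields $ay\,\mathcal{R}^{*}\,y$, that is $y\in P_{1}$. Hence $R_{x}^{*}\subseteq R_{x}^{*a}\cap P_{1}$, and in particular $R_{x}^{*}\subseteq P_{1}$.

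For the reverse inclusion I would take $y\in R_{x}^{*a}\cap P_{1}$ and invoke the converse direction $x\,\mathcal{R}^{*a}\,y \Rightarrow ax\,\mathcal{R}^{*}\,ay$. Combining $ax\,\mathcal{R}^{*}\,ay$ with $ax\,\mathcal{R}^{*}\,x$ (from $x\in P_{1}$) and $ay\,\mathcal{R}^{*}\,y$ (from $y\in P_{1}$), transitivity collapses the chain $x\,\mathcal{R}^{*}\,ax\,\mathcal{R}^{*}\,ay\,\mathcal{R}^{*}\,y$ to $y\,\mathcal{R}^{*}\,x$, so $y\in R_{x}^{*}$. The two inclusions give $R_{x}^{*a}\cap P_{1}=R_{x}^{*}$, and the identity $L_{x}^{*a}\cap P_{2}=L_{x}^{*}$ is entirely dual: one uses that $\mathcal{L}^{*}$ is a right congruence, replaces left translations by right translations, and replaces the defining condition of $P_{1}$ by that of $P_{2}$.

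The step I expect to be the main obstacle is precisely the converse identification $x\,\mathcal{R}^{*a}\,y \Rightarrow ax\,\mathcal{R}^{*}\,ay$, since $\mathcal{R}^{*a}$ quantifies its test elements over $S$ whereas $\mathcal{R}^{*}$ quantifies over $S^{1}$, so the cases $u=1$ or $v=1$ must be recovered. Here I would exploit abundance: choose an idempotent $e$ with $e\,\mathcal{R}^{*}\,x$, so that $e\,\mathcal{R}^{*}\,ax$ and $e(ax)=ax$ when $x\in P_{1}$, and use $e$ as a surrogate for the missing identity. Concretely, for every $w\in S$ one has $(we)(ax)=w(ax)$, which turns an identity-indexed equality into an $S$-indexed one to which the defining equivalence of $\mathcal{R}^{*a}$ applies; transporting the resulting equality faithfully to the $ay$-side, using the idempotent that fixes $ay$ supplied by $y\in P_{1}$, is the one genuinely fiddly verification in the proof.
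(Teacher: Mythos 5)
Your strategy for the substantive inclusion $R_{x}^{*a}\cap P_{1}\subseteq R_{x}^{*}$ coincides with the paper's: both arguments use $ax\,\mathcal{R}^{*}\,x$ and $ay\,\mathcal{R}^{*}\,y$ to pass along the chain $ux=vx\Leftrightarrow uax=vax\Leftrightarrow uay=vay\Leftrightarrow uy=vy$. You in fact go further than the printed proof in two respects: you prove the reverse inclusion $R_{x}^{*}\subseteq R_{x}^{*a}\cap P_{1}$ (via the left-congruence property of $\mathcal{R}^{*}$ together with the observation that $R_{x}^{*}\subseteq P_{1}$ once $x\in P_{1}$), which the paper omits entirely, and you make explicit that the statement must be read with $x\in P_{1}$, without which $x$ itself lies in $R_{x}^{*}$ but not in $R_{x}^{*a}\cap P_{1}$. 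Those parts of your argument are correct.

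The step you yourself flag as delicate, however, is not closed by your sketch, and it is a genuine gap --- one the paper silently elides as well. Since $\mathcal{R}^{*a}$ tests pairs $u,v\in S$ while $\mathcal{R}^{*}$ tests pairs in $S^{1}$, the missing case is $u\in S$, $v=1$: one must show $u(ax)=ax\Rightarrow u(ay)=ay$. Your device of choosing an idempotent $e$ with $e\,\mathcal{R}^{*}\,ax$, rewriting $u(ax)=ax$ as $u(ax)=e(ax)$ and transporting across $\mathcal{R}^{*a}$ delivers only $u(ay)=e(ay)$; to finish you need $e(ay)=ay$. But the idempotent supplied by $y\in P_{1}$ is some possibly different $f$ with $f(ay)=ay$, and establishing $e(ay)=ay$ amounts to showing $e\,\mathcal{R}^{*}\,ay$, which is essentially the conclusion being sought. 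Chasing the available identities yields $efe=e$, $fef=f$ and $fe(ay)=ay$, but not $e(ay)=ay$, so this case needs a genuinely new argument (or the definition of $\mathcal{R}^{*a}$ must be reread with test elements in $(S^{a})^{1}$). The paper's own proof jumps from the $S$-indexed equivalence straight to $x\,\mathcal{R}^{*}\,y$ without addressing this point at all, so your proposal, though incomplete at exactly this junction, is more careful and more complete than the printed argument.
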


\begin{proof} Suppose $ x R^{*a}y$ then for every $u,v\in S^a\,,\quad u\star_{a}x=v\star_{a}x \Leftrightarrow u\star_{a}y=v\star_{a}y$ that is $uax=vax \Leftrightarrow uay=vay$. Since $x,y\in P_1$, we have $uax=vax\Leftrightarrow ux=vx$, and so  
$$ux=vy\Leftrightarrow uax=vax\Leftrightarrow uay=vay\Leftrightarrow uy=vy$$
thus $x\mathcal R^* y$. Dually the proof $L_{x}^{*a}\cap P_2 = L_{x}^{*}$ follows.
\end{proof}

\begin{prop}
Let $ S $ be an abundant monoid. If $ a \in S $ is invertible then $ S^{a} $ is abundant.
\end{prop}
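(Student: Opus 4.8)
The plan is to reduce the statement to the abundance of $S$ itself by exhibiting an isomorphism between the variant $S^a$ and $S$. The key observation is that when $a$ is invertible the sandwich operation can be ``straightened out'' by a translation, so that $S^a$ and $S$ are the same semigroup up to relabelling; since abundance is defined purely through the relations $\mathcal R^{*},\mathcal L^{*}$, which by Lemma 1 depend only on the multiplication, it is invariant under isomorphism, and the conclusion then follows at once.

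Concretely, I would introduce the right translation $\theta\colon S^{a}\to S$ given by $\theta(x)=xa$. The first step is to check that $\theta$ is a semigroup homomorphism from $(S,\star_{a})$ to $S$: for $x,y\in S$ one has $\theta(x\star_{a}y)=(xay)a=(xa)(ya)=\theta(x)\theta(y)$, where the left-hand product is computed in $S^{a}$ and the right-hand product in $S$. The second step uses the invertibility of $a$ to produce a two-sided inverse $y\mapsto ya^{-1}$ for $\theta$, since $\theta(ya^{-1})=ya^{-1}a=y$ and $\theta(x)a^{-1}=xaa^{-1}=x$; hence $\theta$ is a bijection and therefore an isomorphism, so $S^{a}\cong S$. (As a side remark, $a^{-1}$ serves as identity of $S^{a}$, so $S^{a}$ is itself a monoid, which is convenient when invoking the $S^{1}$-form of Lemma 1.)

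For the final step I would invoke that abundance is preserved under semigroup isomorphism. By the cancellation characterization of Lemma 1, the relations $\mathcal R^{*}$ and $\mathcal L^{*}$ of any semigroup are first-order definable from its multiplication alone, so $\theta$ carries the $\mathcal R^{*a}$- and $\mathcal L^{*a}$-classes of $S^{a}$ onto the $\mathcal R^{*}$- and $\mathcal L^{*}$-classes of $S$, and carries idempotents of $S^{a}$ (the elements $e$ with $eae=e$, whose images satisfy $(ea)^{2}=ea$) to idempotents of $S$. Since every $\mathcal R^{*}$- and every $\mathcal L^{*}$-class of the abundant semigroup $S$ contains an idempotent, the same holds for $S^{a}$, which is precisely abundance of $S^{a}$.

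The step requiring the most care is the last one: one must confirm that the relations $\mathcal R^{*a},\mathcal L^{*a}$ introduced for the variant really are the intrinsic $\mathcal R^{*},\mathcal L^{*}$ relations of the semigroup $(S,\star_{a})$, so that the isomorphism-invariance argument applies. This reduces to observing that, because $u\mapsto ua$ is a bijection of $S$, quantifying the multipliers $u,v$ over $S^{a}$ in the definition of $\mathcal R^{*a}$ is equivalent to quantifying $ua,va$ over $S$ in the definition of $\mathcal R^{*}$ at $\theta(x)$. Once this is noted, no genuine obstacle remains, and indeed the whole proposition amounts to the remark that a variant by an invertible element is isomorphic to the original semigroup.
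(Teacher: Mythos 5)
Your proof is correct, but it takes a genuinely different route from the paper. The paper argues directly inside $S^{a}$: starting from idempotents $e,f\in E(S)$ with $e\,\mathcal R^{*}\,a\,\mathcal L^{*}\,f$, it checks that $ea^{-1}$ and $a^{-1}f$ are idempotents of $S^{a}$ and verifies by hand the cancellation condition showing $ea^{-1}\,\mathcal R^{*a}\,a^{-1}$ and $a^{-1}\,\mathcal L^{*a}\,a^{-1}f$; as written this only exhibits idempotents in the starred classes of the single element $a^{-1}$, and one must repeat the computation for an arbitrary $x\in S^{a}$ to finish. You instead prove the stronger structural fact that $\theta\colon x\mapsto xa$ is an isomorphism $S^{a}\to S$ (with inverse $y\mapsto ya^{-1}$), and then transfer abundance along $\theta$ using the fact that $\mathcal R^{*}$ and $\mathcal L^{*}$ are determined by the multiplication alone via the cancellation characterization of Lemma 1. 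Your approach buys completeness (every $\mathcal R^{*a}$- and $\mathcal L^{*a}$-class is handled at once), and it shows more: \emph{any} isomorphism-invariant property of $S$ passes to $S^{a}$ when $a$ is a unit, since $S^{a}\cong S$. The paper's computation, on the other hand, produces the witnessing idempotents of $S^{a}$ explicitly in terms of those of $S$ (namely $ea^{-1}$ and $a^{-1}f$), which your argument recovers only after pulling back through $\theta$. Your side remarks — that $a^{-1}$ is the identity of $S^{a}$, so the quantification over $S^{a}$ in the variant's definition of $\mathcal R^{*a}$ matches the $S^{1}$-quantification of Lemma 1 — correctly address a point the paper glosses over.
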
	
\begin{proof}
For every $a\in S$ we have $e,f\in E(S)$ such that 
$e \mathcal{R}^{*} a\mathcal{L}^{*} f$. Since $ a $ be invertible, then there exist 
$ a^{-1}\in S $ such that $ aa^{-1}= 1= a^{-1} a $ and for $a^{-1}\in S^a,\,\,ea^{-1}$ and $ a^{-1}f$ are in $E(S^a)$. Further $ea^{-1}\star_a a^{-1}=ea^{-1}$, and for every 
$u,v\in S^a$, we have
$$u\star_aa^{-1}=v\star_aa^{-1}\Rightarrow uaa^{-1}=vaa^{-1}\Rightarrow u=v$$
and so $uaea^{-1}=vaea^{-1}\Rightarrow u\star_aea^{-1}=v\star_aea^{-1}$. ie., 
$ea^{-1}\mathcal{R}^{*} a^{-1}$. 

Similarly it can be seen that $a^{-1}\mathcal{L}^{*}a^{-1}f$, thus $S^a$ is abundant.
\end{proof}

\begin{dfn} Let $S$ be a semigroup, $e\in E(S)$. The variants with respect to idempotents $e\in E(S)$ are semigroups $S^e$ with binary composition $\star_e$ defined by $a\star_eb=aeb$ and is given by 
$$S^e=\{a,b\in S\,\mid a\star_eb=aeb, \,\,\text{with}\,\, e\in E(S)\}.$$
\end{dfn}
 
\begin{theorem}
Let $S$ be a weakly $U$ abundant semigroup. Then the variants with respect to idempotents $e\in U$ is $\{S^e,\,\,e\in U\subseteq E(S)\}$ called the idempotent variants  are weakly $U$ abundant.
\end{theorem}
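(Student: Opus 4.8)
The plan is to verify the defining condition of weak $U$-abundance directly in each variant $S^e$, transporting the idempotents guaranteed in $S$ across the sandwich product $x\star_e y = xey$. First I would fix $e\in U$ and record the idempotents of the variant: an element $x\in S^e$ satisfies $x\star_e x=x$ exactly when $xex=x$, so $E(S^e)=\{x\in S : xex=x\}$. Since $e\in U\subseteq E(S)$ gives $eee=e$, we have $e\in E(S^e)$, and the natural set of projections to work with in $S^e$ is $U_e=U\cap E(S^e)=\{u\in U : ueu=u\}$, which is nonempty because it contains $e$. All the variant relations $\tilde{\mathcal L^{U_e}}$ and $\tilde{\mathcal R^{U_e}}$ are then to be read off the rule $x\star_e g=xeg$: for $x,y\in S^e$ one has $x\,\tilde{\mathcal L^{U_e}}\,y$ iff for every $g\in U_e$, $xeg=x\Leftrightarrow yeg=y$, and dually for $\tilde{\mathcal R^{U_e}}$ with the conditions $gex=x$.

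Next, given an arbitrary $a\in S^e$, I would produce the required idempotent of $S^e$ in its $\tilde{\mathcal L^{U_e}}$-class. Because $S$ is weakly $U$-abundant, the $\tilde{\mathcal L^U}$-class of $a$ in $S$ contains an idempotent $f$, giving the biconditional $au=a\Leftrightarrow fu=f$ for all $u\in U$ (and $af=a$ when $f$ can be chosen in $U$). The idea is that a suitable $e$-twist of $f$—one of $ef$, $fe$, or $efe$, chosen so that the result lies in $E(S^e)$—serves as the witness. I would verify that the chosen element $\hat f$ satisfies $\hat f e\hat f=\hat f$, and then check $a\,\tilde{\mathcal L^{U_e}}\,\hat f$ by unwinding, for each $g\in U_e$, the equivalence $aeg=a\Leftrightarrow \hat f eg=\hat f$ from the $S$-level biconditional applied to the appropriate elements built from $e$ and $g$. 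The $\tilde{\mathcal R^{U_e}}$-case is handled dually, starting from the idempotent in the $\tilde{\mathcal R^U}$-class of $a$ in $S$ and the conditions $gex=x$.

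The main obstacle is that $\tilde{\mathcal L^U}$ and $\tilde{\mathcal R^U}$ are in general neither left nor right congruences, so one cannot simply multiply the relation $a\,\tilde{\mathcal L^U}\,f$ by $e$ and conclude the corresponding relation in $S^e$; the transfer must be carried out by hand at the level of the defining implications. Two points need genuine care here: (i) choosing the twist $\hat f$ of $f$ so that it is simultaneously an idempotent of the variant (i.e.\ $\hat f e\hat f=\hat f$) and still $\tilde{\mathcal L^{U_e}}$-equivalent to $a$; and (ii) matching the projection sets, since for $g\in U_e$ the product $eg$ need not lie in $U$, so the $S$-biconditional cannot be invoked with $u=eg$ directly and must instead be applied to an idempotent associated to $eg$. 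Once these two alignment issues are settled, the verification that every $\tilde{\mathcal L^{U_e}}$- and $\tilde{\mathcal R^{U_e}}$-class of $S^e$ meets $E(S^e)$ is routine, and we conclude that each idempotent variant $S^e$ is weakly $U$-abundant.
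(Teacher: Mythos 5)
Your plan is the right one in principle---for each $a\in S^e$ exhibit an idempotent of $S^e$ in its $\tilde{\mathcal L}$-class with respect to a distinguished subset $U_e\subseteq E(S^e)$---and you correctly isolate the two places where the argument could break. But your point (i) is a genuine gap, not merely a point needing care: none of the proposed twists $ef$, $fe$, $efe$ of the $S$-level idempotent $f$ need lie in $E(S^e)$. Indeed $(ef)\star_e(ef)=efeef=efef$, so $ef\in E(S^e)$ if and only if $ef$ is an idempotent of $S$; likewise $fe\in E(S^e)$ iff $(fe)^2=fe$, and $efe\in E(S^e)$ iff $efefe=efe$. Products of idempotents in an arbitrary semigroup are not idempotent, and nothing in the hypothesis that $S$ is weakly $U$-abundant forces $e$ and $f$ to interact this way. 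Without a witness idempotent the whole verification collapses, so as it stands the proposal is a programme rather than a proof. Your point (ii) is likewise left open: even if some $\hat f\in E(S^e)$ were secured, you would still need $aeg=a\Leftrightarrow \hat feg=\hat f$ for all $g\in U_e$, and the $S$-level biconditional for $f$ only speaks about elements of $U$, while $eg$ need not lie in $U$.

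For comparison, the paper's own argument takes a much thinner route: it only observes that if $a\,\tilde{\mathcal L^U}\,b$ in $S$ then $a\star_e e=a\Leftrightarrow b\star_e e=b$, i.e.\ the defining biconditional transfers to the sandwich product for the single idempotent $e\in E(S^e)$ (and dually on the other side). It never produces an idempotent of $S^e$ in each $\tilde{\mathcal L}$- or $\tilde{\mathcal R}$-class of the variant, nor does it specify the distinguished set of idempotents of $S^e$ relative to which abundance is claimed. So your proposal is aimed at the honest content of the statement and diagnoses exactly what is missing from it, but it does not close the gap; the construction of the witness idempotent in $E(S^e)$ is precisely the step that neither you nor the paper supplies.
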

\begin{proof} Since $S$ is weakly abundant, every $\tilde{\mathcal L^U}$ and every 
$\tilde{\mathcal R^U}$ class contains idempotent. $a\tilde{\mathcal L^U}b$ implies that forall $e\in U,\,\,ae=a \,\, \text{if and only if }\,\,be=b.$ For $e\in U$, 
clearly $e\in E(S^e)$ and so
$$a\star_e e=aee=ae=a\Leftrightarrow b\star_ee=bee=be=b$$
Similarly $e\star_e a=eea=ea=a\Leftrightarrow e\star_eb=eeb=eb=b$ ie., $S^e$ is weakly $U$ abundant.
\end{proof}

\section{Congruences on variants}	
An equivalence relation $\rho$ on a semigroup $S$ is a congruence on $S$ 
 if and only if
 $$(xs, yt) \in \rho$$
for all pairs $(x, y), (s, t) \in \rho$.

\begin{dfn} Let $S$ be a semigroup, and let $\rho$ be a congruence on $S$. The quotient semigroup $S/\rho$ is the semigroup whose elements are the congruence classes of $\rho$, and whose operation $\ast$ is defined by
$$ [a]_{\rho} \ast [b]_{\rho} = [ab]_{\rho}$$ 
for $a, b\in S.$
\end{dfn}

Let $ S $ be a semigroup. For each $ u\in S $ we define a relation $ \lambda^{u} $ on $ S $ by 
$$ x  \, \lambda^{u} \, y \quad \Leftrightarrow \quad  ux = uy $$
and the relation $ \rho^{u} $ on $ S $ by 
$$ x \rho^{u} y \quad \Leftrightarrow \quad xu=yu. $$ 
These relations were among the congruences introduced in the context of sandwich semigroups by Symons(cf.\cite{sym}).

\begin{lemma} $S$ is a semigroup, $u\in S$ and $S^u$ the variant of $S$ with respect to $u$. Then the relations $\lambda^u\,\,[\rho^u]$ defined by
$$x \, \lambda^{u} \, y \quad \Leftrightarrow \quad  ux = uy\quad \text{and}$$
$$x\, \rho^{u}y\,\quad \Leftrightarrow \quad xu=yu$$
is a left [right] congruence on $ S^{u} $.
\end{lemma}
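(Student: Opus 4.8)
The plan is to prove both halves by reducing everything to the associativity of the underlying product of $S$, treating the two relations dually. First I would note that $\lambda^u$ is an equivalence relation on the set $S$, since it is exactly the kernel of the left-translation map $x \mapsto ux$ computed in $S$; reflexivity, symmetry and transitivity are therefore immediate. Because $S^u$ has the same underlying set as $S$, this already makes $\lambda^u$ an equivalence on $S^u$, and the only thing left to verify is that it is compatible with left multiplication by the sandwich product $\star_u$.

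To check the left-congruence property, I would take $x,y \in S^u$ with $x \, \lambda^u \, y$, i.e. $ux = uy$, together with an arbitrary $z \in S^u$. Writing out the variant product gives $z \star_u x = zux$ and $z \star_u y = zuy$, so establishing $z \star_u x \, \lambda^u \, z \star_u y$ amounts to showing $u(zux) = u(zuy)$. Regrouping by associativity, $u(zux) = (uz)(ux)$ and $u(zuy) = (uz)(uy)$, and these coincide because $ux = uy$. Hence $\lambda^u$ is a left congruence on $S^u$.

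The relation $\rho^u$ is handled by the mirror-image argument. It is the kernel of the right-translation $x \mapsto xu$, hence an equivalence, and for $x \, \rho^u \, y$ (that is, $xu = yu$) and arbitrary $z$ one has $x \star_u z = xuz$ and $y \star_u z = yuz$, so that $(x \star_u z)u = (xu)(zu) = (yu)(zu) = (y \star_u z)u$. This gives $x \star_u z \, \rho^u \, y \star_u z$, so $\rho^u$ is a right congruence on $S^u$.

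The only point deserving attention --- and it is a bookkeeping matter rather than a real obstacle --- is that $\lambda^u$ and $\rho^u$ are defined through the \emph{original} multiplication of $S$, whereas the congruence condition must be verified against the \emph{sandwich} multiplication $\star_u$. The entire content of the argument is that the single factor $u$ introduced by $\star_u$ can always be absorbed into the translation that defines the relation, which is precisely what associativity delivers. I therefore expect no difficulty beyond writing the two products out explicitly.
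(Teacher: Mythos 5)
Your proposal is correct, and it is the clean, direct verification that the lemma calls for: you keep the relation exactly as defined ($ux=uy$), multiply on the left by $z$ under $\star_u$, and absorb the extra sandwich factor via associativity, $u(z\star_u x)=(uz)(ux)=(uz)(uy)=u(z\star_u y)$; the dual computation handles $\rho^u$. The paper's own proof aims at the same idea but executes it less carefully: it silently replaces the defining condition $ux=uy$ by $u\star_u x=u\star_u y$, i.e.\ $uux=uuy$, which is in general a coarser relation than the one in the statement, and it then writes a chain of biconditionals such as $uux=uuy\Leftrightarrow puux=puuy$ in which only the forward implication is valid (left multiplication need not reflect equality), with a final step that does not quite land on the definition of $(px,py)\in\lambda^u$. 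Your version avoids both of these slips, so in effect you have supplied the correct argument that the paper's proof gestures at; nothing further is needed.
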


\begin{proof} As $ \lambda^{u} $ defined on $S$ by $ x  \, \lambda^{u} \, y \,
\Leftrightarrow \,  ux = uy $ is a congruence, we have $(x,y)\in \lambda^{u}$ implies $(px,py)\in \lambda^{u}$ for every $p\in S$. Now regarding $ \lambda^{u} $ as a the relation  on 
$ S^u $ defined by 
\begin{align*}
   x  \, \lambda^{u} \, y &\Leftrightarrow u \star_{u} x=u \star_{u} y\\
   &\Leftrightarrow  uux=uuy
\end{align*}
 Now for $ p\in S^u $, we have
 \begin{align*}
   uux=uuy &\Leftrightarrow puux=puuy\\
   &\Leftrightarrow p\star_{u} ux=p\star_{u} uy\\
   &\Leftrightarrow (px,py)\in \lambda^u .
 \end{align*}  
Hence $\lambda^u$ if a left congruence. Similarly it is easily seen that $\rho^{u}$ is a right congruence on $ S^{u} $.
\end{proof}

\begin{lemma}
$ S $ be a semigroup,  $ u\in S ,\,\,S^u$ the variant wit respect to $u$ and $\lambda^u$ the congruence defined on $S^u$.  Then $ S^{u}/\lambda^{u}\cong uS. $
\end{lemma}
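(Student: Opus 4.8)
The plan is to exhibit an explicit surjective homomorphism from $S^u$ onto $uS$ whose kernel is exactly $\lambda^u$, and then to invoke the fundamental homomorphism theorem recorded in the preliminaries, namely $S/\ker\phi\cong\operatorname{im}\phi$. The natural candidate is the map $\phi\colon S^u\to uS$ defined by $x\phi=ux$. First I would check that $uS$ is a genuine subsemigroup of $S$ under the ordinary multiplication, so that it can serve as the codomain: for $us,ut\in uS$ one has $(us)(ut)=u(sut)\in uS$, whence $uS$ is closed.

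Next I would verify that $\phi$ is a homomorphism from $(S^u,\star_u)$ to $(uS,\cdot)$, and this is the one step where the sandwich operation must be handled with care. Computing on one side $(x\star_u y)\phi=(xuy)\phi=uxuy$, and on the other $(x\phi)(y\phi)=(ux)(uy)=uxuy$, the two expressions agree, so $\phi$ respects the two multiplications. Surjectivity is then immediate, since every element of $uS$ is of the form $ux=x\phi$ for some $x\in S$.

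Finally I would identify the kernel: $(x,y)\in\ker\phi$ means $x\phi=y\phi$, i.e.\ $ux=uy$, which is precisely the defining condition of $\lambda^u$; hence $\ker\phi=\lambda^u$. Applying the fundamental homomorphism theorem to $\phi$ then delivers $S^u/\lambda^u\cong uS$. The main obstacle I anticipate is not computational but definitional: the preceding lemma phrases $\lambda^u$ on $S^u$ through $u\star_u x=u\star_u y$, that is $uux=uuy$, whereas the isomorphism above relies on the original form $ux=uy$. I would therefore make explicit at the outset that the relevant relation is $ux=uy$, so that $\ker\phi$ coincides with $\lambda^u$ on the nose; once this is pinned down, the homomorphism theorem applies directly and nothing further is required.
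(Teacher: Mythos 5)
Your proposal is correct and follows essentially the same route as the paper: the same map $x\mapsto ux$, the same verification $(x\star_u y)\phi=uxuy=(x\phi)(y\phi)$, the same identification of the kernel with $\lambda^u$, and the same appeal to the fundamental homomorphism theorem. Your added care in checking that $uS$ is closed under multiplication and in flagging that $\lambda^u$ must be read as $ux=uy$ (not the $uux=uuy$ reformulation from the preceding lemma) only tightens what the paper already does.
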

\begin{proof}
	Define a map $ \phi :S^u \to uS $ by $ \phi(x)= ux.$ Then $ \phi $ is a homomorphism, for $ x,y \in S $, $ \phi (x \star_{u} y)= u(xuy)
= (ux)(uy)= \phi(x) \phi(y) $. Now  
$$ ker\,(\phi^{-1} \circ \phi)=\{(x,y)\,:\, ux=uy \}$$
ie., $ker\,(\phi^{-1} \circ \phi)=\lambda^u$, hence we have $S^u/\lambda^u\cong uS$.
\end{proof}

\begin{coro}
	Let $ S $ be a semigroup and $ a , b $ be $ \mathcal{L} $- related elements of $ S $. Then $ S^{a}/\lambda^{a}\cong S^{b}/\lambda^{b} $.
\end{coro}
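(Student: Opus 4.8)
The plan is to reduce the statement to the preceding lemma. Applying the isomorphism $S^{u}/\lambda^{u}\cong uS$ with $u=a$ and with $u=b$ gives $S^{a}/\lambda^{a}\cong aS$ and $S^{b}/\lambda^{b}\cong bS$, so it is enough to construct a semigroup isomorphism between the principal images $aS=\{ax:x\in S\}$ and $bS=\{bx:x\in S\}$, each carrying the multiplication inherited from $S$ (so that $(ax)(ax')=axax'$, and similarly for $b$).

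To build this map I would first turn the hypothesis $a\,\mathcal{L}\,b$ into multipliers: since $S^{1}a=S^{1}b$ there are $p,q\in S^{1}$ with $a=pb$ and $b=qa$, whence $pqa=a$ and $qpb=b$. Two consequences are useful. On the one hand, left translation by $q$ sends $aS$ onto $bS$ and left translation by $p$ sends $bS$ onto $aS$, and the identities $pq\,ax=ax$, $qp\,bx=bx$ show that these are mutually inverse bijections $aS\leftrightarrow bS$. On the other hand, the map $\Phi\colon aS\to bS$, $\Phi(ax)=bxp$, is itself a homomorphism: using $pb=a$ one gets $\Phi((ax)(ax'))=bxax'p=(bxp)(bx'p)$, and it is well defined because $ax=ax'$ forces $bxp=q(ax)p=q(ax')p=bx'p$. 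Symmetrically, $\Psi\colon bS\to aS$, $\Psi(by)=ayq$, is a homomorphism.

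The hard part will be bijectivity: the obvious bijection $ax\mapsto qax$ is not multiplicative, while the homomorphism $\Phi$ is not obviously injective or onto, so the two must be reconciled. Computing the composite shows that $\Psi\circ\Phi$ is right translation by $pq$ on $aS$, so $\Phi$ is an isomorphism as soon as $pq$ acts as a right identity on $aS$ and $qp$ as a right identity on $bS$; the $\mathcal{L}$-hypothesis, however, only supplies the left-sided identities $pqa=a$ and $qpb=b$. Equivalently, transporting the product of $aS$ across the bijection turns it (by a short computation) into the $p$-sandwich operation $u\star_{p}v=upv$ on $bS$, so the real content is to show that this variant of $bS$ is isomorphic to $bS$ itself. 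My strategy would therefore be to first dispose of the parallel case $a\,\mathcal{R}\,b$, where $a=bs$ and $b=at$ give $aS=bS$ outright and the result is immediate, and then to isolate exactly what invertibility of the sandwich element $p$ on $bS$ is needed to promote the one-sided identities above to the two-sided ones that close the $\mathcal{L}$-case.
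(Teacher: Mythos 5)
Your reduction is exactly the one the paper intends: the corollary is stated with no proof of its own, as an immediate consequence of the lemma $S^{u}/\lambda^{u}\cong uS$, so everything comes down to showing $aS\cong bS$. The obstruction you then run into is not a defect of your argument but of the statement itself. The hypothesis $a\,\mathcal{L}\,b$, i.e.\ $S^{1}a=S^{1}b$, controls $Sa$ and $Sb$, not $aS$ and $bS$, and your homomorphism $\Phi(ax)=bxp$ really cannot be made bijective in general: the corollary as printed is false. For a counterexample take $S=T_{4}$ with maps written on the right and composed left to right, $f$ the map $1,2,3\mapsto 1$, $4\mapsto 2$, and $g$ the map $1,2\mapsto 1$, $3,4\mapsto 2$. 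Then $\mathrm{im}\,f=\mathrm{im}\,g=\{1,2\}$, so $Sf=Sg$ and $f\,\mathcal{L}\,g$; but $fS=\{k:\ker k\supseteq\ker f\}$ consists of the $16$ maps constant on $\{1,2,3\}$ and has $7$ idempotents, while $gS$ consists of the $16$ maps constant on $\{1,2\}$ and on $\{3,4\}$ and has $8$ idempotents. Hence $fS\not\cong gS$, and by the lemma $S^{f}/\lambda^{f}\not\cong S^{g}/\lambda^{g}$.

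The corollary becomes true, and is then settled by the one-line argument you describe as the ``parallel case,'' if $\mathcal{L}$ is replaced by $\mathcal{R}$: from $aS^{1}=bS^{1}$ one gets $aS=bS$ as subsemigroups of $S$, whence $S^{a}/\lambda^{a}\cong aS=bS\cong S^{b}/\lambda^{b}$. (Dually, $a\,\mathcal{L}\,b$ gives $Sa=Sb$ and hence $S^{a}/\rho^{a}\cong S^{b}/\rho^{b}$.) So the paper has simply paired the wrong Green relation with the one-sided congruence $\lambda^{u}$, and your diagnosis --- in particular the observation that transporting the multiplication of $aS$ to $bS$ along $ax\mapsto qax$ yields the sandwich product $u\star_{p}v$ rather than the product of $bS$, and that the $\mathcal{L}$-hypothesis only supplies the left-sided identities $pqa=a$, $qpb=b$ --- locates the failure precisely. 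The only thing missing from your write-up is the conclusion that no repair is possible under the stated hypothesis, which the counterexample above supplies.
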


\section{partial order on variants}

In any semigroup $(S,\cdot)$ for $a,b\in S$  the relation $a\leq b$ if and only if $a=xb=by,\,\,xa=a$ for some $x,y\in S^1$ is called the natural partial order of $S$. When restricted to the idempotent sets $E(S)$ this coinsides with te usual ordering $e\leq f$ if and only if $ef=fe=e$.

 \begin{lemma} Let $\{S^e\, \mid e\in E(S)\}$ be the idempotent variant of a semigroup $S$. Then the set of idempotents in $S^e$ is
 $$E(S^e)=\{f\in E(S)\,:\,f\leq e \}.$$
 \end{lemma}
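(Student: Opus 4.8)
The plan is to peel the statement into its two set inclusions, after first translating idempotency in $S^{e}$ into a relation in $S$. Since the product in the idempotent variant is $a\star_{e}b=aeb$, an element $f$ belongs to $E(S^{e})$ exactly when $f\star_{e}f=fef=f$. I would record this reformulation at the outset, and also recall from the opening of this section that on idempotents the natural partial order specialises to $f\le e\iff ef=fe=f$. The whole proof then reduces to showing that $fef=f$ is equivalent to the conjunction $f^{2}=f$ and $ef=fe=f$.

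For the inclusion $\{f\in E(S):f\le e\}\subseteq E(S^{e})$ the computation is immediate: if $f^{2}=f$ and $fe=f$, then $f\star_{e}f=fef=(fe)f=ff=f$, so $f\in E(S^{e})$. This direction uses only $f\le e$ together with $f\in E(S)$ and presents no difficulty.

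The reverse inclusion is the substantive one. Starting from $f\in E(S^{e})$, that is $fef=f$, I would first note that the products $fe$ and $ef$ are themselves idempotents of $S$, since $(fe)^{2}=(fef)e=fe$ and $(ef)^{2}=e(fef)=ef$; here I am using $ee=e$ to fold the repeated factor. The target is then to promote the single identity $fef=f$ to the three identities $f^{2}=f$, $fe=f$ and $ef=f$, for once these hold, $f\in E(S)$ follows from $f^{2}=(fe)f=fef=f$, and $f\le e$ follows from $ef=fe=f$.

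That promotion is exactly the step I expect to be the main obstacle, and the point I would scrutinise most carefully. The relation $fef=f$ by itself does not force $fe=f$ or $ef=f$ in an arbitrary semigroup --- a left-zero band already exhibits every element satisfying $fef=f$ while $ef=e\ne f$ --- so the argument cannot be purely formal. I therefore anticipate having to invoke some compatibility of $f$ with $e$ beyond $fef=f$ (for instance that $f$ already lies in $E(S)$ and that the idempotents $fe,ef$ absorb back into $f$), and my main effort would go into isolating precisely which such hypothesis is operative and making it explicit; once it is in place, the equality $E(S^{e})=\{f\in E(S):f\le e\}$ drops out by combining the two inclusions above.
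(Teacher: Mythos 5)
Your first inclusion is the same as the paper's: from $f^2=f$ and $fe=f$ one computes $f\star_e f=fef=ff=f$. The real content of your proposal is your refusal to carry out the reverse inclusion, and that refusal is justified. At the corresponding point the paper's own proof reads, for $f\in E(S^e)$, ``$fef=f$, hence $f^2=f$ and $fe=ef=f$'' --- precisely the unjustified promotion you flagged, asserted with no intermediate step and with no hypothesis elsewhere in the paper to support it. Your left-zero band is a genuine counterexample, not merely a heuristic worry: in a two-element band $\{e,f\}$ with $xy=x$ for all $x,y$ one has $f\star_e f=fef=f$, so $f\in E(S^e)$, while $ef=e\neq f$, so $f\not\leq e$; and even under the paper's general definition of the natural partial order one would need $f=ey$ for some $y\in S^1$, which in a left-zero band forces $f=e$. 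Hence $E(S^e)$ strictly contains $\{f\in E(S):f\leq e\}$ and the lemma as stated is false; the search you describe in your final paragraph for the ``operative hypothesis'' cannot succeed without amending the statement, since none is present.

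What is actually true, and what your opening reformulation already isolates, is the identity $E(S^e)=\{x\in S: xex=x\}$, the set of elements admitting $e$ as an inner inverse; this always contains $\{f\in E(S):f\leq e\}$ (your first inclusion) but coincides with it only under additional assumptions such as commutativity of $S$. Your observation that $fe$ and $ef$ are idempotents of $S$ whenever $fef=f$ is correct and is the natural raw material for a corrected statement. In short: your proposal is not an incomplete proof so much as a correct diagnosis that the paper's proof of the reverse inclusion is a non sequitur.
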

 
\begin{proof} Let $f\in E(S)$ and $f\leq e$. Then 
 $$f \star_{e}f=fef=ff=f$$
 ie., $f\in E(S^e)$. Conversly if $f\in E(S^e)$ then
 $$f \star_{e} f= f \Leftrightarrow fef=f$$
 hence $f^2=f$ and $fe=ef=f$ that is $f\leq e$ 
\end{proof} 
 
Now define a relation on $ E(S^{e} )$ by, 
$$ x\leq_{e} y \Leftrightarrow  x \star_{e} y =y \star_{e} x =x $$  
for all $ x,y \in E(S^e).$

\begin{lemma}
	The relation $ \leq_{e} $ defined on $ E(S^{e} )$ is a partial order on $ E(S^{e}) $.
\end{lemma}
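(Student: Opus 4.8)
The plan is to reduce the sandwich-defined relation $\leq_e$ to the ordinary natural partial order on idempotents and then read off the three defining properties. The conceptual crux is the observation that on the subset $E(S^e)$ the sandwich operation $\star_e$ coincides with the ordinary product of $S$. Indeed, by the preceding lemma every $x\in E(S^e)$ satisfies $x\leq e$, so $xe=ex=x$; hence for any $x,y\in E(S^e)$ one has $x\star_e y=xey=xy$ and likewise $y\star_e x=yx$. Consequently the defining condition $x\star_e y=y\star_e x=x$ collapses to $xy=yx=x$, which is precisely the natural partial order of $E(S)$ restricted to $E(S^e)$. Establishing this identity first makes the remaining verifications purely a matter of ordinary multiplication and associativity in $S$.

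With this in hand, I would check reflexivity, antisymmetry, and transitivity in turn. For reflexivity, since $x$ is idempotent in $S^e$ we have $x\star_e x=xex=x$, so $x\leq_e x$ for every $x\in E(S^e)$. For antisymmetry, I would assume $x\leq_e y$ and $y\leq_e x$; the first relation supplies $x\star_e y=x$ while the second supplies $x\star_e y=y$ (reading off the appropriate clause of each definition), and comparing these gives $x=y$. For transitivity, I would assume $x\leq_e y$ and $y\leq_e z$, i.e. $xy=yx=x$ and $yz=zy=y$, and then compute $xz=(xy)z=x(yz)=xy=x$ and $zx=z(yx)=(zy)x=yx=x$, whence $x\leq_e z$.

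I expect transitivity to be the only step requiring genuine computation, and it is the main obstacle in the sense that it is where both hypotheses about the elements are actually used. The computation itself is routine once the reduction $x\star_e y=xy$ is available, but it is worth emphasising that this reduction relies essentially on every element of $E(S^e)$ lying below $e$; without that the sandwich factor $e$ could not be absorbed and the argument would not go through. Thus the substance of the proof lies in the preliminary identification of $\star_e$ with the product on $E(S^e)$, after which $\leq_e$ is seen to be nothing other than the well-known idempotent order, which is already a partial order.
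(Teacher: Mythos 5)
Your proof is correct. The skeleton is the same as the paper's (verify reflexivity, antisymmetry and transitivity directly), but you route the argument through a preliminary reduction that the paper does not make explicit: using the preceding lemma, every $x\in E(S^e)$ satisfies $xe=ex=x$, so $x\star_e y=xey=xy$ on $E(S^e)$, and hence $\leq_e$ is literally the restriction to $E(S^e)$ of the usual idempotent order $xy=yx=x$, which is already known to be a partial order. The paper instead works with $\star_e$ throughout and obtains transitivity from associativity of the sandwich operation, $x\star_e z=(x\star_e y)\star_e z=x\star_e(y\star_e z)=x\star_e y=x$; it only invokes $xe=ex=x$ for reflexivity. The two arguments are computationally equivalent (associativity of $\star_e$ is doing exactly the work that your absorption of the sandwich factor $e$ does), but your version has the small advantage of identifying $\leq_e$ with a familiar order, while the paper's version has the small advantage of not depending on the characterization of $E(S^e)$ except for reflexivity. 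One cosmetic point: your antisymmetry step correctly extracts $x\star_e y=x$ from $x\leq_e y$ and $x\star_e y=y$ from $y\leq_e x$, which is cleaner than the paper's bare assertion that antisymmetry is ``obvious.''
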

\begin{proof}
	For $ x \in E(S^{e}) $, we have $ x\leq_{e} x =xex=x$ since $xe=ex=x$. ie., $x\leq_ex$. 
Consider $ y,z$ also in  $E(S^{e}) $, with $ x\leq_{e} y  $ and $ y\leq_{e} z$, then 
$x \star_{e} y=y\star_{e}x=x$ and $y \star_{e} z=z\star_{e}y=y$
$$x=x \leq_{e} y=x \star_{e}(y \star_{e} z)=(x \star_{e}y)\star_{e} z=x \star_{e}z$$
and similarly $z \star_{e}x=x$. Also when  $ x\leq_{e} y$ and $ y\leq_{e} x$ then obviously $x=y$. Hence $ \leq_{e} $ is a partial oreder on $ E(S^{e}) $.  
\end{proof}

Next we generalise this partial order in $ E(S^{e}) $ to the whole of $ S^{e} $ by defining  
$ a\leq_{e} b$ if and only if there exists $ x,y \in (S^{e})^{1} $ such that 
$$ x\star_{e} b= b \star_{e} y=a \quad\text{and}\quad  x\star_{e} a=a $$.
\begin{rem}
	For elements $ a,b \in S^{e}, \,\, a\leq_{e} b \Rightarrow a\leq b $ in $ S $.
\end{rem}
\begin{proof}
	Suppose $ a\leq_{e} b $, then 
$$ x\star_{e} b= b \star_{e} y=a \quad\text{and}\quad  x\star_{e} a=a $$	
for some $ x,y \in (S^{e})^{1} $. ie., $ a=xeb = bey$ and $a=xea$, thus 
$a= x'b= b y'$ and $a= x'a =  ay' $ for some $ x',y' \in S^{1} $, hence $ a\leq b $ in $ S $.
\end{proof}

\begin{lemma}
$ S $ is a semigroup $e\in E(S)$ and $S^e$ be an idempotent variant. For $a,b\in S^e$, then the following hold.\begin{enumerate}
		\item $ a\leq_{e} f,$ and $ f\in E(S^{e}) $ then $ a\in E(S^{e}) $
		\item $ a\leq_{e} b $, and $ b $ is regular element then $  a $ is regular
			\end{enumerate}
\end{lemma}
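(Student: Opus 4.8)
The plan is to prove both parts by direct computation inside $S^e$, unfolding the definition of $\le_e$ and using associativity of $\star_e$ together with the single defining relation $x\star_e a=a$. In each part I would first record that the hypothesis $a\le_e c$ (with $c=f$ in (1) and $c=b$ in (2)) supplies elements $x,y\in(S^e)^1$ with $x\star_e c=c\star_e y=a$ and $x\star_e a=a$. It is worth noting at the outset that $\le_e$ is exactly the natural partial order of the semigroup $S^e$, so both assertions are the familiar facts that an element lying beneath an idempotent is itself idempotent and that an element lying beneath a regular element is regular; the task is simply to verify them in the $\star_e$-multiplication.

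For part (1), with $c=f$ idempotent I would compute $a\star_e a$ after writing the left copy of $a$ as $x\star_e f$ and the right copy as $f\star_e y$. Associativity gives $a\star_e a=x\star_e(f\star_e f)\star_e y=x\star_e f\star_e y$ since $f\star_e f=f$; regrouping as $x\star_e(f\star_e y)=x\star_e a=a$ then finishes it, so $a\in E(S^e)$. The only subtlety is to apply $f\star_e y=a$ before the relation $x\star_e a=a$, that is, to collapse the product from the right.

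For part (2), regularity of $b$ gives $z\in S^e$ with $b\star_e z\star_e b=b$, and I would take the standard inverse $b'=z\star_e b\star_e z\in S^e$ of $b$ as the candidate. The goal is $a\star_e b'\star_e a=a$. Substituting $a=x\star_e b$ for the left copy of $a$ and $a=b\star_e y$ for the right copy rewrites $a\star_e b'\star_e a$ as $x\star_e(b\star_e z\star_e b\star_e z\star_e b)\star_e y$; the central bracket collapses to $b$ by applying $b\star_e z\star_e b=b$ twice, leaving $x\star_e b\star_e y=x\star_e(b\star_e y)=x\star_e a=a$. Hence $a\star_e b'\star_e a=a$ and $a$ is regular.

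I expect the main obstacle to be the bookkeeping in part (2): choosing the inverse $b'=z\star_e b\star_e z$ rather than $z$ itself, and arranging the long product so that the regularity relation collapses the middle while the final reduction uses the partial-order relation in its correct one-sided form. Because the definition of $\le_e$ guarantees only the left relation $x\star_e a=a$ and not its right dual, the grouping in the last step must be set up to terminate with $x\star_e(b\star_e y)=x\star_e a=a$; attempting to finish on the other side would require a relation that is not available. Closure of $b'$ in $S^e$, and the incidental fact that $b\star_e z$ and $z\star_e b$ are idempotents, are routine and need no separate argument.
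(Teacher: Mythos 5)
Your proof is correct and follows essentially the same direct computation as the paper: in (1) both arguments expand $a\star_e a$ via $a=x\star_e f=f\star_e y$, absorb $f\star_e f=f$, and finish with $x\star_e a=a$; in (2) both substitute $a=x\star_e b=b\star_e y$ into $a\star_e b'\star_e a$ and collapse the middle using regularity of $b$ (the paper simply uses any pre-inverse $b'$ with $b=b\star_e b'\star_e b$ directly, so your extra step of replacing $z$ by $z\star_e b\star_e z$ is harmless but unnecessary).
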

\begin{proof}
	\begin{enumerate}
\item For $ a\leq_{e} f$, then there exists $x,y\in (S^e)^1$ such that 
 $a= x\star_{e}f= f\star_{e} y$ and $a = x\star_{e} a$. 
 Now $ a^{2}= a\star_{e} a = x\star_{e}a \star_{e}f \star_{e} y 
 = x\star_{e} a \star_{e} y = a\star_{e} y = a $. ie., $ a\in E(S^{u}) $.\\

\item For $ a\leq_{e} b$ we have $x,y\in (S^e)^1$ such that  $a= x\star_{e} b = b\star_{e} y$ and $a =x\star_{e} a $. Since $ b $ is regular in $ S^{e} $ there exist $ b' $ in $ S^{e} $ such that $ b=b\star_{e} b' \star_{e} b $. Then for every inverse $ b' $ of $ b $ in $ S^{e} $,
		$ a= a\star_{e} y = (x\star_{e} b)\star_{e} y = x\star_{e} (b\star_{e} b' \star_{e} b)\star_{e} y = (x\star_{e} b) \star_{e} b' \star_{e} (b\star_{e} y) = a\star_{e} b' \star_{e} a $. Therefore, $ a $ is regular in $ S^{e} $.\\
	
\end{enumerate}
\end{proof}


\begin{thebibliography}{15}
	
	\bibitem{Cli} A.H. Clifford and G.B.Preston. The Algebraic theory of semigroups. Vol I. Mathematical surveys, No.7, American Mathematical       
	Society, Providence, R.I., 1961.
	\bibitem{Howie} J. M. Howie. {\em An Introduction to Semigroup Theory.} Academic Press, London, 1976.
	\bibitem{Dolinka} I. Dolinka and J. East. Variants of finite full transformation semigroups . Internat. J. Algebra Comput., 25(8):1187-1222.
	\bibitem{fou} J. B. Fountain, Abundant semigroups, Proc. London Math. Soc., 44(1982), 103–129.
	\bibitem{hic} J. B. Hickey, On variants of a semigroup, Bull. Austral. Math. Soc. 34(3) (1986) 447–459.
	\bibitem{mcali} D.B.McAlister. One - to - one partial right translations of a right cancellative semigroups, J.Algebra, 43(1976), 
	231  251.
	\bibitem{Hick}J. B. Hickey, Semigroups under a sandwich operation, Proc. Edinburgh Math. Soc.(2) 26(3) (1983) 371–382.
	\bibitem{How} J. M. Howie, Fundamentals of Semigroup Theory, London Mathematical Society Monographs New Series, Vol. 2 (The Clarendon Press, 
	Oxford University Press, New York, 1995).
	\bibitem{law} T. A. Khan and M. V. Lawson, Variants of regular semigroups, Semigroup Forum	62(3) (2001) 358–374.
	\bibitem{mit} J.D.Mitchel et al., Semigroups-GAP Package, version 3.0.20, 2018.
	\bibitem{sym} J . S. V. Symons: On a generalization of the transformation semigroup, J. Aust. Math. Soc.19 (1975), 47 - 61 .
	\bibitem{Tsy}G. Y. Tsyaputa, Transformation semigroups with the deformed multiplication, Bullet. Univ. Kiev, Series: Mech. Math. 3(1)     
	(2003)82–  88.
	\bibitem{Tsy1} G. Y. Tsyaputa, Green’s relations on the deformed transformation semigroups, Algebra Discrete Math. (1) (2004) 121–131.
\end{thebibliography}
\end{document}